\numberwithin{equation}{section}
\definecolor{darkblue}{rgb}{0,0,0.5}
\newdimen\margin
\def\textno#1&#2\par{
	\margin=\hsize
	\advance\margin by -4\parindent
	\setbox1=\hbox{\sl#1}
	\ifdim\wd1 < \margin
	$$\box1\eqno#2$$
	\else
	\bigbreak
	\hbox to \hsize{\indent$\vcenter{\advance\hsize by -3\parindent
			\it\noindent#1}\hfil#2$}
	\bigbreak
	\fi}
\newtheorem{theorem}[algorithm]{Theorem}
\newtheorem{prop}[algorithm]{Proposition}
\newtheorem{lemma}[algorithm]{Lemma}
\theoremstyle{definition}
\newtheorem{remark}[algorithm]{Remark}
\def\lateproof#1{\removelastskip\penalty55\medskip\noindent\begin{stepenv}\end{stepenv}{\bf Proof of #1. }} % in each main proof, claim and step counter set back
\def\noproof{{\unskip\nobreak\hfill\penalty50\hskip2em\hbox{}\nobreak\hfill%
		$\square$\parfillskip=0pt\finalhyphendemerits=0\par}\goodbreak}
\def\endproof{\noproof\bigskip}
\newcounter{stepenv}
\newenvironment{stepenv}[1][]{\refstepcounter{stepenv}}{}
\newcounter{step}[stepenv]
\newcounter{substep}[step]
\renewcommand{\thesubstep}{\thestep.\arabic{substep}}
\newcounter{claim}[stepenv]
\newcommand{\cE}{\mathcal{E}}
\newcommand{\cF}{\mathcal{F}}
\newcommand{\bN}{\mathbb{N}}
\newcommand{\bR}{\mathbb{R}}
\def\eps{{\epsilon}}
\newcommand{\eul}{{e}}
\newcommand{\defn}{\emph}
\newcommand{\prob}[1]{\mathrm{\mathbb{P}}\left[#1\right]}
\newcommand{\cprob}[2]{\prob{#1 \;\middle|\; #2}}
\newcommand{\expn}[1]{\mathrm{\mathbb{E}}\left[#1\right]}
\def\sm{\setminus}
\newcommand{\Set}[1]{\{#1\}}
\newcommand{\set}[2]{\{#1\,:\;#2\}}
\def\In{\subset}
\newcommand{\IND}{\mathbbm{1}}
\def\COMMENT#1{}
\def\TASK#1{}
\let\TASK=\footnote             % COMMENT OUT for clean output
\begin{document}

	\title{The largest hole in sparse random graphs}

	\author{
		Nemanja Dragani\'c \thanks{Department of Mathematics, ETH, 8092 Z\"urich, Switzerland.
			Email: \href{mailto:nemanja.draganic@math.ethz.ch}{\nolinkurl{nemanja.draganic@math.ethz.ch}}. Research supported in part by SNSF grant 200021\_196965.
		}		
		\and
		Stefan Glock \thanks{Institute for Theoretical Studies, ETH, 8092 Z\"urich, Switzerland.
			Email: \href{mailto:dr.stefan.glock@gmail.com}{\nolinkurl{dr.stefan.glock@gmail.com}}. Research supported by Dr.~Max R\"ossler, the Walter Haefner Foundation and the ETH Z\"urich Foundation.}
		\and
		Michael Krivelevich \thanks{School of Mathematical Sciences, Raymond and Beverly Sackler Faculty of Exact Sciences, Tel Aviv University, Tel Aviv, 6997801, Israel. Email: \href{mailto:krivelev@tauex.tau.ac.il}{\nolinkurl{krivelev@tauex.tau.ac.il}}. Research supported in part by USA-Israel BSF grant 2018267, and by ISF grant 1261/17.}
	}
	
	\unmarkedfntext{2020 \emph{Mathematics Subject Classification}. Primary 05C80, 05C38; Secondary 05C05, 05D40.}
	\unmarkedfntext{\emph{Key words and phrases}. random graph, induced path, hole.}
	
	\date{}
	
	\maketitle

	\begin{abstract} 
		We show that for any $d=d(n)$ with $d_0(\eps) \le d =o(n)$, with high probability, the size of a largest induced cycle in the random graph $G(n,d/n)$ is $(2\pm \eps)\frac{n}{d}\log d$. This settles a long-standing open problem in random graph theory.
	\end{abstract}

	\section{Introduction}
	
	Let $G(n,p)$ denote the binomial random graph on $n$ vertices, where each edge is included independently with probability~$p$. In this paper, we are concerned with \emph{induced} subgraphs of $G(n,p)$, specifically trees, forests, paths and cycles.
	
	The study of induced trees in $G(n,p)$ was initiated by Erd\H{o}s and Palka~\cite{EP:83} in the 80s. Among other things, they showed that for constant $p$, with high probability (\textbf{whp}) the size of a largest induced tree in $G(n,p)$ is asymptotically equal to $2\log_q(np)$, where $q=\frac{1}{1-p}$. The obtained value coincides asymptotically with the \emph{independence number} of $G(n,p)$, the study of which dates back even further to the work of Bollob\'as and Erd\H{o}s~\cite{BE:76}, Grimmett and McDiarmid~\cite{GM:75} and Matula~\cite{matula:76}.
	
	As a natural continuation of their work, Erd\H{o}s and Palka~\cite{EP:83} posed the problem of determining the size of a largest induced tree in \emph{sparse} random graphs, when $p=d/n$ for some fixed constant~$d$. More precisely, they conjectured that for every $d>1$ there exists $c(d)>0$ such that \textbf{whp} $G(n,p)$ contains an induced tree of order at least $c(d)\cdot n$.
	This problem was settled independently in the late 80s by Fernandez de la Vega~\cite{fernandez-de-la-Vega:86}, Frieze and Jackson~\cite{FJ:87a}, Ku\v{c}era and R\"{o}dl~\cite{KR:87} as well as \L{}uczak and Palka~\cite{LP:88}.
	In particular, Fernandez de la Vega~\cite{fernandez-de-la-Vega:86} showed that one can take $c(d)\sim \frac{\log d}{d}$, and a simple first moment calculation reveals that this is tight within a factor of~$2$. Here and throughout, $\log$ denotes the natural logarithm if no base is specified.
	
	Two natural questions arise from there. First, one might wonder whether it is possible to find not only some  \emph{arbitrary} induced tree, but a \emph{specific} one, say a long induced path. Indeed, Frieze and Jackson~\cite{FJ:87b} in a separate paper showed that \textbf{whp} there is an induced path of length $\tilde{c}(d)\cdot n$. Two weaknesses of this result were that their proof only worked for sufficiently large~$d$, and that the value obtained for $\tilde{c}(d)$ was far away from the optimal one.
	Later, \L{}uczak~\cite{luczak:93} and Suen~\cite{suen:92} independently remedied this situation twofold. They proved that an induced path of length linear in $n$ exists for all $d>1$, showing that the conjecture of Erd\H{o}s and Palka holds even for induced paths. Moreover, they showed that one can take $\tilde{c}(d)\sim \frac{\log d}{d}$ as in the case of arbitrary trees. 
	
	A second obvious question is to determine the size of a largest induced tree (and path) more precisely. The aforementioned results were proved by analysing the behaviour of certain constructive algorithms which produce large induced trees and paths. The value $\frac{\log d}{d}$ seems to constitute a natural barrier for such approaches. On the other hand, recall that in the dense case, the size of a largest induced tree coincides asymptotically with the independence number. In 1990, Frieze~\cite{frieze:90} showed that the first moment bound $\sim2\frac{n}{d}\log d$ is tight for the independence number, also in the sparse case. His proof is based on the profound observation that the second moment method can be used even in situations where it apparently does not work, if one can combine it with a strong concentration inequality.
	Finally, in 1996, Fernandez de la Vega~\cite{fernandez-de-la-Vega:96} observed that the earlier achievements around induced trees can be combined with Frieze's breakthrough to prove that the size of a largest induced tree is indeed $\sim 2\frac{n}{d}\log d$. This complements the result of Erd\H{o}s and Palka~\cite{EP:83} in the dense case. (When $p=o_n(1)$, we have $2\log_q(np)\sim 2\frac{n}{d}\log d$.)
	
	Fernandez de la Vega~\cite{fernandez-de-la-Vega:96} also posed the natural problem of improving the \L{}uczak--Suen bound~\cite{luczak:93,suen:92} for induced paths, for which his approach was ``apparently helpless''. Despite the widely held belief (see~\cite{CDKS:ta,DS:18} for instance) that the upper bound $\sim 2\frac{n}{d}\log d$ obtained via the first moment method is tight, the implicit constant $1$ has not been improved in the last 30 years.

	\subsection{Long induced paths and cycles}\label{sec:paths}
	
	Our main result is the following, which settles this problem and gives an asymptotically optimal result for the size of a largest induced path in $G(n,p)$. 
	
	\begin{theorem}\label{thm:path}
		For any $\eps>0$ there is $d_0$ such that \textbf{whp} $G(n,p)$ contains an induced path of length at least $(2-\eps)\frac{n}{d}\log d$ whenever $d_0\le d=pn =o(n)$.
	\end{theorem}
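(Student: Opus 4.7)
Let $\ell = (2-\eps)\frac{n}{d}\log d$ and let $X$ be the number of induced paths on $\ell+1$ vertices in $G(n,p)$. The plan is a two-step strategy: first, a Frieze-style second moment computation shows $X > 0$ with non-negligible probability; second, a block-decomposition argument boosts this to whp. For the first moment,
\[
\mathbb{E}[X] = \tfrac{1}{2}(n)_{\ell+1}\, p^{\ell}(1-p)^{\binom{\ell+1}{2}-\ell},
\]
which evaluated at $p = d/n$ gives $\log \mathbb{E}[X] \ge c\,\eps\cdot \frac{n(\log d)^2}{d}$ for an absolute constant $c > 0$, in particular super-polynomially large once $d \ge d_0(\eps)$. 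This simply confirms that the constant $2$ is the first-moment threshold and leaves room for a second moment attack.

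The heart of the argument is then the second moment. I would estimate $\mathbb{E}[X^2] = \sum_{P_1, P_2}\mathbb{P}[P_1, P_2 \text{ both induced}]$ by partitioning pairs of induced paths according to the structure of $P_1 \cap P_2$: the number of shared vertices, the number of shared edges, and the decomposition of the intersection into maximal common sub-paths inside each $P_i$. Compared to Frieze's simpler independent-set calculation, the novel complication is that overlap simultaneously \emph{saves} on forbidden non-edges (a gain of $(1-p)^{-1}$ per shared pair) and \emph{forces} shared edges to be present (a loss of $p$-weight), and the balance depends finely on the intersection pattern. A careful bookkeeping should yield $\mathbb{E}[X^2]\le \mathbb{E}[X]^2\cdot \exp(o(\log \mathbb{E}[X]))$, so that Paley--Zygmund gives $\mathbb{P}[X>0] \ge \exp(-o(\log \mathbb{E}[X]))$: weaker than whp, but enough for the next step.

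The main obstacle is boosting this positive-probability bound to whp. Direct concentration is problematic: the longest induced path length is not sufficiently Lipschitz under single-vertex changes, since rerandomising one vertex's edges can roughly halve a longest induced path that passes through it, so the standard Azuma-type argument that Frieze used for the independence number does not transfer directly. My plan is a block-decomposition: partition $V$ into $m$ blocks $V_1,\ldots,V_m$ of equal size $n/m$, so that $G(n,p)[V_i]$ are independent copies of $G(n/m, d/n)$ with effective density $d'=d/m$. Applying the second moment step inside each block gives, with probability at least $p_0 = \exp(-o(\log \mathbb{E}[X]))$, an induced path of length $(2-\eps/2)\frac{n}{d}\log(d/m) \ge (2-\eps)\frac{n}{d}\log d$ in that block, provided $m$ is chosen so that both $d/m \to \infty$ (to remain in the sparse regime where the second moment calculation is valid) and $m \cdot p_0 \to \infty$ (for amplification). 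With such a choice of $m$, independence across blocks yields $\mathbb{P}[\text{no block contains the path}] \le (1-p_0)^m = o(1)$, and whp $G(n,p)$ itself contains the desired induced path. Making these parameter choices compatible — and in particular sharpening the second moment bound enough that $p_0$ exceeds $1/m$ for a feasible $m$ — is the crux of the argument.
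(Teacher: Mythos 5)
Your first two steps track the paper fairly closely: the second moment computation in the paper (Lemma~\ref{lem:2ndmoment}) indeed yields, for any bounded-degree forest on $\le(2-\eps)\log_q d$ vertices, a lower bound $\prob{X>0}\ge\exp\bigl(-\Theta(n\log^2 d/d^2)\bigr)$, and you are right that the longest-induced-path variable is not Lipschitz, so Frieze's concentration trick cannot be applied to it directly. The problem is your proposed fix.

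The block-decomposition amplification cannot work, for a quantitative reason you would hit immediately when choosing $m$. For fixed $d$ the second-moment lower bound $p_0$ is $\exp(-\Theta(n))$, exponentially small in $n$, so the requirement $m\,p_0\to\infty$ forces $m\ge\exp(\Omega(n))$, far beyond the trivial ceiling $m\le n$. Worse, shrinking to blocks of size $n/m$ does not help and in fact hurts: applying the second-moment bound inside $G(n/m,d/n)$ (with parameters $n'=n/m$, $d'=d/m$) gives $p_0=\exp\bigl(-\Theta(n'(\log d')^2/(d')^2)\bigr)=\exp\bigl(-\Theta(nm(\log(d/m))^2/d^2)\bigr)$, which \emph{decreases} in $m$. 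So the two constraints $d/m\to\infty$ and $m\,p_0\to\infty$ are jointly infeasible in the regime $d=O(1)$ that the theorem is really about. Independent blocks simply do not provide the amplification you need; Frieze's method works precisely because the concentration bound and the second-moment bound are of the same exponential order, not because one of them can be inflated by repetition.

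The paper's actual route is structurally different and this is the missing idea. It applies the second-moment-plus-concentration combination not to the longest-induced-path variable but to the largest induced linear forest whose components are paths of a fixed (moderate) length $L$; that variable \emph{is} $O(L)$-Lipschitz, so Talagrand's inequality applies, and Lemma~\ref{lem:forests} gives such an induced forest of order $(2-\eps)\frac{n}{d}\log d$ whp. One then needs a genuinely new connecting step: a multi-round exposure (sprinkling) $G=G_1\cup G_2$ with $G_2$ much sparser, an independent set $I$ reserved in advance with no $G_1$-edges to the forest, and an auxiliary random digraph on the components of the forest in which edges record suitable $G_2$-connections through $I$. A DFS-based lemma then produces an almost-spanning path in that digraph, which translates back to a long induced path in $G$. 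This connecting machinery has no counterpart in your proposal, and without it the argument does not close.
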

	
	For the sake of generality, we state our result for a wide range of functions~$d=d(n)$. 
	However, we remark that the most interesting case is when $d$ is a sufficiently large constant. 
	In fact, for dense graphs, when $d\ge n^{1/2}\log^2 n$, more precise results are already known (cf.~\cite{DS:18,rucinski:87}).
	
	Some of the earlier results~\cite{DS:18,FJ:87b,luczak:93} are phrased in terms of induced cycles (\defn{holes}). This does not make the problem much harder (see Remark~\ref{rem:cycle}).
	We also note that our proof is self-contained, except for well-known facts from probability and graph theory.
	
	We now briefly outline our strategy.
	Roughly speaking, the idea is to find a long induced path in two steps. First, we find many disjoint paths of some chosen length $L=L(d)$, such that the subgraph consisting of their union is induced. To achieve this, we generalize a recent result of Cooley, Dragani\'c, Kang and Sudakov~\cite{CDKS:ta} who obtained large induced matchings. We will discuss this further in Section~\ref{sec:forests}.
	Assuming now we can find such an induced linear forest $F$, the aim is to connect almost all of the small paths into one long induced path, using a few additional vertices.\footnote{Recall that a forest is called \defn{linear} if its components are paths.} As a ``reservoir'' for these connecting vertices, we find (actually, even before finding $F$) a large independent set $I$ which is disjoint from~$F$.
	To model the connecting step, we give each path in $F$ a direction, and define an auxiliary digraph whose vertices are the paths, and two paths $(P_1,P_2)$ form an edge if there exists a ``connecting'' vertex $a\in I$ that has some edge to one of the last $\eps L$ vertices of $P_1$ and some edge to one of the first $\eps L$ vertices of $P_2$, but no edge to the rest of~$F$.
	Our goal is to find an almost spanning path in this auxiliary digraph. Observe that this will provide us with a path in $G(n,p)$ of length roughly~$|F|$.
	The intuition is that the auxiliary digraph behaves quite randomly, which gives us hope that, even though it is very sparse, we can find an almost spanning path. 
	
	Crucially, we do not perform this connecting step in the whole random graph. This is because ensuring that the new connecting vertices are only connected to two vertices of $F$ is too costly, making the auxiliary digraph so sparse that it is impossible to find an almost spanning path. Instead, we use a sprinkling argument, meaning that we view $G(n,p)$ as the union of two independent random graphs $G_1$ and $G_2$, where the edge probability of $G_2$ is much smaller than~$p$. We then reveal the random choices in several stages. When finding $F$ and $I$ as above, we make sure that there are no $G_1$-edges between $F$ and~$I$. Then, in the final connecting step, it remains to expose the $G_2$-edges between $F$ and $I$, with the advantage that now the edge probability is much smaller, making it much more suitable for a desired ``sparse'' connection.
	
	\subsection{Induced forests with small components}\label{sec:forests}
	
	As outlined above, in the first step of our argument, we seek an induced linear forest whose components are reasonably long paths.
	For this, we generalize a recent result of Cooley, Dragani\'c, Kang and Sudakov~\cite{CDKS:ta}. They proved that \textbf{whp} $G(n,p)$ contains an induced matching with $\sim 2\log_q (np)$ vertices, which is asymptotically best possible. They also anticipated that using a similar approach one can probably obtain induced forests with larger, but bounded components. As a by-product, we confirm this.
	To state our result, we need the following definition. For a given graph $T$, a \defn{$T$-matching} is a graph whose components are all isomorphic to~$T$. Hence, a $K_2$-matching is simply a matching, and specifying the following statement for $T=K_2$ implies the main result of~\cite{CDKS:ta}. 
	
	\begin{theorem}\label{thm:forests}
		For any $\eps>0$ and any fixed tree $T$, there exists $d_0>0$ such that \textbf{whp} the order of the largest induced $T$-matching in $G(n,p)$ is $(2\pm \eps)\log_q(np)$, where $q=\frac{1}{1-p}$, whenever $\frac{d_0}{n}\le p\le 0.99$.
	\end{theorem}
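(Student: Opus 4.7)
I would prove the upper bound by a first moment calculation. With $t=|V(T)|$ and $X_m$ counting induced $T$-matchings in $G(n,p)$ with $m$ components, a routine application of Stirling's formula gives
\begin{equation*}
\ln\bE[X_m] \;=\; m\bigl(\ln(n/m)+(t-1)\ln(np)\bigr)\,-\,\binom{mt}{2}\ln q\,+\,O(mt),
\end{equation*}
which tends to $-\infty$ once $mt\ge (2+\eps)\log_q(np)$; Markov's inequality then settles this direction.

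For the lower bound I would follow the strategy of \cite{CDKS:ta}, which handles the matching case $T=K_2$, and adapt it to general trees. Set $m=\lceil (2-\eps/2)\log_q(np)/t\rceil$ and let $X=X_m$; the calculation above shows $\bE[X]\to\infty$. Let $Y$ denote the maximum number of copies of $T$ in an induced $T$-matching of $G(n,p)$. Since every vertex lies in at most one component of any $T$-matching, resampling the edges incident to a single vertex changes $Y$ by at most one, so $Y$ is $1$-Lipschitz under the vertex-exposure martingale; Azuma--Hoeffding then gives $|Y-\bE[Y]|=O(\sqrt{n\log n})$ whp.

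The core calculation is a second-moment estimate showing $\bE[X^2]=O((\bE[X])^2)$. When $p$ is bounded away from zero, this alone gives $\bP[X\ge 1]=1-o(1)$. In the sparse regime, Paley--Zygmund yields $\bP[X\ge 1]\ge c>0$; combined with the Lipschitz property this forces $\bE[Y]\ge m-O(\sqrt{n\log n})$, and a second application of Azuma then gives $Y\ge m-O(\sqrt{n\log n})$ whp. In the ranges of $p$ covered by the theorem, $\sqrt{n\log n}=o(\log_q(np))$ whenever the martingale step is actually needed, so multiplying the resulting bound by~$t$ yields an induced $T$-matching on at least $(2-\eps)\log_q(np)$ vertices.

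The main obstacle is the second-moment estimate itself. One parameterises ordered pairs $(F_1,F_2)$ of induced $T$-matchings of size $m$ by the data of the overlap $V(F_1)\cap V(F_2)$ together with how many full copies of $T$, proper subtrees of $T$, and isolated vertices are shared between $F_1$ and $F_2$. Each overlap pattern produces a distinct combination of $p$-factors (for edges forced to appear in both matchings) and $(1-p)$-factors (for edges forced to be absent) in $\bP[F_1,F_2\text{ both induced}]$, and the number of pairs realising a given pattern must be counted carefully. The task is to enumerate these patterns uniformly in the fixed tree $T$ and show that the total contribution of non-negligibly overlapping pairs is only $O((\bE[X])^2)$. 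This bookkeeping is richer than in the matching case $T=K_2$ and constitutes the technical heart of the generalisation.
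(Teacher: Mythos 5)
Your overall architecture -- first moment for the upper bound, second moment plus concentration ("Frieze's trick") for the lower bound -- is the right strategy, and it is what the paper does. But two interlocking claims at the heart of your lower-bound argument are false in the sparse regime $d = O(\sqrt{n})$, and that is precisely the regime where the theorem is interesting.

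First, you assert that the second-moment calculation gives $\bE[X^2]=O\bigl((\bE[X])^2\bigr)$, so that Paley--Zygmund yields $\prob{X\ge 1}\ge c>0$. This is not achievable when $p$ is small. What the second moment actually gives (and what the paper's Lemma~\ref{lem:2ndmoment} proves) is
\[
\frac{\bE[X^2]}{(\bE[X])^2} \;\le\; \exp\!\Bigl(O\bigl(\Delta^2\,n\log^2 d/d^2\bigr)\Bigr),
\]
so $\prob{X\ge 1}\ge \exp\!\bigl(-O(n\log^2 d/d^2)\bigr)$. For fixed $d$ this is exponentially small in $n$, not a constant; the variance of $X$ genuinely swamps its mean squared. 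This is exactly the phenomenon Frieze identified: the second moment method ``apparently does not work'' here, and the entire point of the concentration step is to rescue it despite the lower bound on $\prob{X\ge 1}$ being tiny.

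Second, because the Paley--Zygmund bound $\rho$ is only $\exp(-\Theta(n\log^2 d/d^2))$, the concentration inequality you pair with it must beat $\rho$, i.e.\ the deviation probability at scale $\lambda$ must be smaller than $\rho$ while $\lambda$ stays $o(\log_q(np))$. Azuma with vertex exposure and an $O(1)$-Lipschitz condition gives $\prob{|Y-\bE Y|\ge\lambda}\le \exp(-\Theta(\lambda^2/n))$; to undercut $\rho$ you would need $\lambda = \Omega\bigl(\sqrt{n\cdot n\log^2 d/d^2}\bigr)=\Omega\bigl(\frac{n\log d}{d}\bigr)$, which is of the same order as the target $b\sim\frac{n\log d}{d}$ (indeed larger by a $\sqrt{\log d}$ factor). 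So the conclusion $Y\ge m-o(m)$ evaporates, and the intermediate claim that $\sqrt{n\log n}=o(\log_q(np))$ ``whenever the martingale step is actually needed'' is also false: the martingale step is needed up to $d\approx\sqrt{n}\,\mathrm{polylog}(n)$, where $\log_q(np)\approx\sqrt{n}/\mathrm{polylog}(n)\ll\sqrt{n\log n}$. The paper avoids this by using Talagrand's inequality with the certifiability parameter $f(b)\approx b$: since a witness for $X\ge s$ only needs $\approx s$ coordinates, the deviation scale is governed by $\sqrt{b}\approx\sqrt{n\log d/d}$ rather than $\sqrt{n}$, which is exactly what is needed to beat $\rho$ while keeping the error $o(b)$. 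Swapping Azuma for Talagrand here is not cosmetic; it is the essential ingredient. Your description of the second-moment bookkeeping for general $T$ is reasonable, and the paper organizes it via an ``intersection forest'' $I_\sigma$ parameterised by its number of vertices and components (Propositions~\ref{prop:branching} and~\ref{prop:counting extension}), but the target of that calculation has to be the $\exp(O(n\log^2 d/d^2))$ bound above, not a constant.
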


	We use the same approach as in~\cite{CDKS:ta}, which goes back to the work of Frieze~\cite{frieze:90} (see also~\cite{bollobas:88,SS:87}).
	The basic idea is as follows. Suppose we have a random variable $X$ and want to show that \textbf{whp}, $X\ge b-t$, where $b$ is some ``target'' value and $t$ a small error. 
	For many natural variables, we know that $X$ is ``concentrated'', say $\prob{|X- \expn{X}| \ge  t/2} < \rho$ for some small~$\rho$. This is the case for instance when $X$ is determined by many independent random choices, each of which has a small effect.
	However, it might be difficult to estimate $\expn{X}$ well enough.
	But if we know in addition that $\prob{X\ge b}\ge \rho$, then we can combine both estimates to $\prob{X\ge b}> \prob{X\ge \expn{X}+t/2}$, which clearly implies that $b\le \expn{X}+t/2$. Now applying the other side of the concentration inequality, we infer $\prob{X\le b-t} \le \prob{X\le \expn{X}-t/2}< \rho$, as desired.
	
	In our case, say $X$ is the maximum order of an induced $T$-matching in~$G(n,p)$. Since adding or deleting edges at any one vertex can create or destroy at most one component, we know that $X$ is $|T|$-Lipschitz and hence concentrated (see Section~\ref{sec:concentration}). Using the above approach, it remains to complement this with a lower bound on the probability that $X\ge b$. Introduce a new random variable $Y$ which is the \defn{number} of induced $T$-matchings of order~$b$ (a multiple of $|T|$). Then we have $X\ge b$ if and only if $Y>0$. The main technical work is to obtain a lower bound for the probability of the latter event using the second moment method.
	We note that by applying the second moment method to labelled copies (instead of unlabelled copies as in~\cite{CDKS:ta}) we obtain a shorter proof even in the case of matchings (see Section~\ref{sec:2ndmoment}).
	More crucially, it turns out that one can even find induced forests where the component sizes can grow as a function of~$d$, which we need in the proof of Theorem~\ref{thm:path} (specifically, we need $L\gg \log d$). This is provided by the following auxiliary result. We note that the same holds for forests with arbitrary components of bounded degree, and one can also let the degree slowly grow with~$d$, but we choose to keep the presentation simple.
	
	\begin{lemma}\label{lem:forests}
		For any $\eps>0$, there exists $d_0>0$ such that \textbf{whp} $G(n,p)$ contains an induced linear forest of order at least $(2-\eps)p^{-1}\log(np)$ and component paths of order $d^{1/2}/\log^4 d$, whenever $d_0\le d=np \le n^{1/2}\log^2 n$. 
	\end{lemma}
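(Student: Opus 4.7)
The plan is to run the Frieze-style argument of Theorem~\ref{thm:forests} with the fixed tree $T$ replaced by the path $P_L$ on $L:=\lfloor d^{1/2}/\log^4 d\rfloor$ vertices, which now grows with $d$. Set $k:=\lfloor(2-\eps)p^{-1}\log(np)/L\rfloor$ and $b:=kL$, so that $b=(2-\eps-o(1))p^{-1}\log(np)$ is an integer multiple of $L$. Let $X$ denote the maximum order of an induced $P_L$-matching in $G(n,p)$, and let $Y$ count the labelled induced $P_L$-matchings of order exactly $b$ (each coded as an ordered $k$-tuple of oriented $L$-paths on disjoint vertex sets). Then $X\ge b$ iff $Y>0$, and $X$ is $L$-Lipschitz under vertex exposure, since modifying the edges at one vertex can create or destroy at most one $P_L$-component of an optimal matching.

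For the second moment, direct counting gives
\[\bE Y \;=\; n^{\underline{b}}\,p^{k(L-1)}(1-p)^{\binom{b}{2}-k(L-1)},\]
which, under the above choice of $b$, is super-polynomially large. One estimates $\bE Y^2$ by classifying ordered pairs $(M,M')$ of candidate matchings according to the size $r$ of their vertex intersection and their shared path-edge count $\ell$, following the labelled second moment calculation used in Section~\ref{sec:2ndmoment}. Each overlap type contributes a product of a combinatorial count for the overlap and a probability factor corresponding to shared edges and non-edges; using $L\le d^{1/2}/\log^4 d$ together with the fact that the matching occupies only an $O((\log d)/d)$ fraction of the vertices, one shows that every $r\ge 1$ contribution is dominated by the $r=0$ contribution. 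This yields $\bE Y^2\le O((\bE Y)^2)$, and Paley--Zygmund gives $\Pr[Y>0]\ge \rho$ for some $\rho=\Omega(1)$. The main technical novelty compared with fixed $T$ is that the calculation must track the dependence of all quantities on $L=L(d)$.

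Concentration of $X$ then completes the Frieze trick: one aims for $\Pr[|X-\bE X|\ge \eps b/3]<\rho/2$. Vertex-exposure Azuma--Hoeffding applied with the $L$-Lipschitz property already gives this when $d^3=o(n\log^{10}d)$. For larger $d$ inside the stated range this crude bound is too weak, but one can sharpen it by working on the high probability event that $\Delta(G(n,p))=O(d)$ and only an $O((\log d)/d)$-fraction of vertices lies in any particular maximum induced $P_L$-matching; the effective change produced by re-randomising the edges at a \emph{typical} vertex is then much smaller than $L$, and an appropriate concentration inequality with typical bounded differences delivers the bound. Combining the two ingredients via the Frieze argument from the introduction forces $b\le \bE X + \eps b/3$ and hence $\Pr[X\le b-\eps b]=o(1)$, which is the desired whp statement about the induced linear forest. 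The main obstacle is exactly this concentration step in the upper part of the $d$-range, where the naive $L$-Lipschitz bound has to be replaced by a typical-Lipschitz argument exploiting the sparsity of the optimal matching.
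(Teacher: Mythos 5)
Your high-level plan (run the Frieze trick for $T=P_L$ with $L$ growing) matches the paper, but two of your claims are wrong, and the proposed fix for the second one is much harder than the paper's actual argument. First, the assertion that the second moment calculation yields $\bE Y^2\le O((\bE Y)^2)$ and hence $\prob{Y>0}\ge\rho$ with $\rho=\Omega(1)$ is false throughout the interesting range. Lemma~\ref{lem:2ndmoment} only gives $\prob{Y>0}\ge\exp\bigl(-\Theta(n\log^2 d/d^2)\bigr)$, which is exponentially small in $n$ for, say, constant $d$; the entire point of the Frieze technique is precisely that the second moment ratio is \emph{not} $O(1)$ in the sparse regime and one compensates with a concentration bound whose error is even smaller. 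Your claim of an $\Omega(1)$ lower bound would make the concentration step trivial, and indeed nothing beyond first moment plus constant-probability second moment would work here — but that is not what happens.

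Second, you then try to use Azuma--Hoeffding, correctly observe that it only delivers what you want when roughly $d^3=o(n\,\mathrm{polylog})$, and propose an unexecuted ``typical Lipschitz'' workaround. This is not needed. The paper uses Talagrand's inequality, exactly as in the proof of Theorem~\ref{thm:forests}: besides being $L$-Lipschitz, $X$ is $f$-certifiable with $f(s)=s+L$, so the deviation window in Talagrand scales like $tL\sqrt{b+L}$ rather than $t L\sqrt{n}$. Since $b=\Theta\bigl(\tfrac{n}{d}\log d\bigr)$, this gives an extra factor of roughly $\sqrt{(\log d)/d}$ over Azuma, which is precisely what lets $L$ grow up to $d^{1/2}/\log^4 d$ while keeping the error $tL\sqrt{b+L}\le n/d$ with $t=\sqrt{n}\log^3 d/d$. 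With that choice, Talagrand's tail $\exp(-t^2/4)=\exp(-n\log^6 d/(4d^2))$ comfortably beats the second-moment lower bound $\exp(-\Theta(n\log^3 d/d^2))$, closing the Frieze argument. So the proof of Lemma~\ref{lem:forests} is literally a one-line observation once Theorem~\ref{thm:forests} is proved: Lemma~\ref{lem:2ndmoment} already covers $P_L$ since $\Delta(P_L)=2\le d^{\eps/6}$, and the only thing that can fail as $L$ grows is the bound~\eqref{Lipschitz}, which survives exactly for $L\le d^{1/2}/\log^4 d$. You should drop the $\Omega(1)$ second-moment claim, drop Azuma, and use Talagrand's certifiability to get the $\sqrt{b}$ window.
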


	\subsection{Notation}
	
	We use standard graph theoretical notation. In particular, for a graph $G$ and $U\In V(G)$, we let $|G|$ denote the order of $G$, $e(G)$ the number of edges in $G$, $\Delta(G)$ the maximum degree and $G[U]$ the subgraph induced by~$U$.
	%Recall that a forest is called \defn{linear} if its components are paths. 
	
	For functions $f(n),g(n)$, we write $f\sim g$ if $\lim_{n\to\infty}\frac{f(n)}{g(n)}=1$. We also use the standard Landau symbols $o(\cdot),\Omega(\cdot),\Theta(\cdot),O(\cdot),\omega(\cdot)$, where subscripts disclose the variable that tends to infinity if this is not clear from the context.
	We use $\approx$ non-rigorously in informal discussions and ask the reader to interpret it correctly.
	
	An event $\cE_n$ holds \defn{with high probability} (\textbf{whp}) if $\prob{\cE_n}= 1-o_n(1)$.
	%We use $\log$ to denote the natural logarithm with base~$\eul$.
	Also, $[n]=\Set{1,\dots,n}$ and $(n)_k=n(n-1)\cdots (n-k+1)$.
	%Recall the standard estimates $\binom{n}{k}\le \left(\frac{en}{k}\right)^k$, $1+x\le \eul^x$ and $\log(1+x)=x+O\left(\frac{x^2}{1-|x|}\right)$, where the latter holds for $|x|<1$ and implies $1-x\ge \eul^{-x-O(x^2)}$ for $0\le x\le 0.99$, say.
	As customary, we tacitly treat large numbers like integers whenever this has no effect on the argument.

	\section{Second moment}\label{sec:2ndmoment}
	
	In this section, we use the second moment method to derive a lower bound on the probability that $G(n,d/n)$ contains a given induced linear forest of size $\sim 2\frac{n}{d}\log d$. Here, it does not matter whether the components are small.
	More precisely, we prove that for fixed $\eps>0$ and $d\ge d_0(\eps)$, \emph{any} bounded degree forest $F$ on $k\le (2-\eps)\frac{n}{d}\log d$ vertices is an induced subgraph of $G(n,d/n)$ with probability at least $\exp(-O(\frac{n\log^2 d}{d^2}))$.
	Moreover, when $d=\omega(n^{1/2}\log n)$, the obtained probability bound tends to~$1$. In particular, in this regime, the lemma readily implies the existence of an induced path of the asymptotically optimal length $\sim 2\frac{n}{d}\log d$ \textbf{whp}.

	\begin{lemma}\label{lem:2ndmoment}
		For any $\eps>0$, there exists $d_0$ such that the following holds for all $d_0\le d < n$, where $p=\frac{d}{n}$ and $q=\frac{1}{1-p}$. For any forest $F$ on $k\le (2-\eps)\log_q d$ vertices with maximum degree $\Delta\le d^{\eps/6}$, the probability that $G(n,p)$ contains an induced copy of $F$ is at least $$\exp\left(-10^4\Delta^2\frac{n\log^2 d}{d^2} -2d^{-\eps/7}\right) .$$
	\end{lemma}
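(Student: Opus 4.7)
The plan is to apply the second moment method to the number $Y$ of labelled induced copies of $F$ in $G(n,p)$, i.e., the number of injections $\phi:V(F)\to[n]$ with $\phi(u)\phi(v)\in E(G)$ if and only if $uv\in E(F)$. Since $\{Y>0\}$ is the event that $G(n,p)$ contains an induced copy of $F$, the Paley--Zygmund inequality reduces the lemma to showing
\[
\frac{\expn{Y^2}}{\expn{Y}^2} \;\le\; \exp\!\left(10^{4}\Delta^{2}\frac{n\log^{2}d}{d^{2}}+2d^{-\eps/7}\right).
\]
The first moment is immediate: $\expn{Y}=(n)_{k}\,p^{e(F)}(1-p)^{\binom{k}{2}-e(F)}$.

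For the second moment, I would expand $\expn{Y^2}=\sum_{\phi,\psi}\prob{\IND_\phi \IND_\psi}$ and decompose pairs by the overlap size $s=|\phi(V(F))\cap\psi(V(F))|$. The joint probability vanishes unless $\phi$ and $\psi$ are \emph{consistent} on the overlap $W$, meaning that the natural bijection between the preimages $\phi^{-1}(W)$ and $\psi^{-1}(W)$ is a graph isomorphism between the corresponding induced subgraphs of $F$. Letting $t$ denote the number of $F$-edges on this shared vertex set, a direct calculation yields
\[
\prob{\IND_\phi \IND_\psi} \;=\; \prob{\IND_\phi}^{2}(1-p)^{-\binom{s}{2}}\bigl(\tfrac{1-p}{p}\bigr)^{t}.
\]
The essential combinatorial input is to bound, for fixed $\phi$ and fixed overlap preimage $T\subseteq V(F)$ with $|T|=s$, the number of consistent second copies $\psi$: since $F[T]$ is a forest on $s$ vertices with $s-t$ tree components and $F$ has maximum degree at most $\Delta$, the number of induced embeddings of $F[T]$ into $F$ is at most $k^{s-t}\Delta^{t}$ (pick a root of each component in at most $k$ ways, then traverse along edges in at most $\Delta$ ways per step). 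Combined with the $(n-k)_{k-s}$ placements of the remaining $k-s$ vertices of $\psi$ outside $\phi(V(F))$, this gives, up to a harmless factor $(1+O(k/n))^{k}=e^{O(k^{2}/n)}$,
\[
\frac{\expn{Y^2}}{\expn{Y}^{2}} \;\le\; 1 + \sum_{s=1}^{k} \frac{(k^{2}/n)^{s}}{s!}(1-p)^{-\binom{s}{2}} \!\sum_{\substack{T\subseteq V(F)\\ |T|=s}}\! \left(\frac{\Delta(1-p)}{kp}\right)^{e(F[T])}.
\]

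To finish, I would control the inner sum via a hypergeometric moment-generating-function estimate: each $F$-edge lies in a uniformly random $s$-subset $T$ with probability $\Theta((s/k)^{2})$, so $\sum_{T}M^{e(F[T])}\le\binom{k}{s}\exp(O(Ms^{2}/k))$ with $M=\Delta(1-p)/(kp)$. The hypothesis $k\ge (2-\eps)\log_{q}d$ gives $kp\gtrsim (2-\eps)\log d$ and hence $M=O(d^{\eps/6}/\log d)$, so the outer sum is dominated by the ``typical'' overlap $s^{*}\approx k^{2}/n=\Theta(n\log^{2}d/d^{2})$, producing the main exponential factor $e^{O(k^{2}/n)}$, which is absorbed into the $10^{4}\Delta^{2}n\log^{2}d/d^{2}$ term of the target bound. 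The Gaussian correction $(1-p)^{-\binom{s}{2}}=e^{O(ps^{2})}$ is negligible near $s^{*}$. The main obstacle is the tail $s\gg s^{*}$, where $(1-p)^{-\binom{s}{2}}$ blows up: here the super-exponential decay of $(k^{2}/n)^{s}/s!$ together with the slack afforded by the strict inequality $k\le (2-\eps)\log_{q}d$ (which ensures $p^{e(F)}(1-p)^{-\binom{k}{2}}$ is bounded by a negative power of $d$) suffices to bound the tail by $\exp(2d^{-\eps/7})$, matching the correction term in the statement. Paley--Zygmund then yields the lemma.
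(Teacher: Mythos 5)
Your proposal takes essentially the same route as the paper: apply Paley--Zygmund to the number $Y$ of labelled induced copies, expand $\expn{Y^2}/\expn{Y}^2$ by the overlap size $s$ and the number $t$ of shared $F$-edges, derive the conditional-probability identity $\cprob{A_\psi}{A_\phi}/\prob{A_\psi}=(1-p)^{-\binom{s}{2}}((1-p)/p)^{t}$, and bound the number of second copies with a given overlap by a rooted-tree argument giving $k^{s-t}\Delta^{t}$ (your bound is exactly the paper's $(k)_c\Delta^{s-c}$ with $c=s-t$; the paper's Proposition~\ref{prop:counting extension} packages it together with the count of choices of the intersection graph). The split into a main term near the typical overlap and a tail controlled by the $\eps$-slack in $k\le(2-\eps)\log_q d$ is also the same idea as the paper's two-range estimate.

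The one genuinely different step is how you close the inner sum: you propose $\sum_{T}M^{e(F[T])}\le\binom{k}{s}\exp\bigl(O(Ms^2/k)\bigr)$ via a ``hypergeometric MGF'' heuristic, whereas the paper instead counts, for each $c$, the number of $s$-vertex, $c$-component subgraphs of $F$ (Proposition~\ref{prop:branching}, giving $\binom{k}{c}(2e\Delta)^s$) and sums explicitly. Your bound, if true, is cleaner and would even improve the $\Delta^2$-dependence. But it is asserted without proof, and the standard negative-association argument for random $s$-subsets does not apply directly: the indicators $\IND[u,v\in T]$ for edges $uv$ sharing an endpoint are \emph{positively} correlated, which is precisely where the $\Delta$-dependence must enter (as it does via $(2e\Delta)^s$ in the paper's Proposition). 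This step would need a real proof (e.g.\ partitioning $E(F)$ into $O(\Delta)$ matchings and applying H\"older, or just falling back on the paper's explicit subgraph count). Two smaller slips: the hypothesis is $k\le(2-\eps)\log_q d$, not $\ge$, so the deduction $kp\gtrsim\log d$ and hence $M=O(d^{\eps/6}/\log d)$ does not follow for small $k$ (that regime has to be dispatched separately, though it is the easy case); and the stated prefactor $(k^2/n)^s/s!$ carries an extra $\binom{k}{s}$-type factor relative to what the overlap accounting actually produces, which is harmless but should be reconciled.
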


	The proof of Lemma~\ref{lem:2ndmoment} is based on the second moment method and will be given below. We start off with some basic preparations which will also motivate the main counting tool.
	
	Fix a forest~$F$ of order~$k$. Let $Y$ be the random variable which counts the number of \emph{labelled} induced copies of $F$ in $G(n,p)$. More formally, let $\cF$ be the set of all injections $\sigma\colon V(F)\to [n]$, and for $\sigma\in \cF$, let $F_\sigma$ be the graph with vertex set $\set{\sigma(x)}{x\in V(F)}$ and edge set $\set{\sigma(x)\sigma(y)}{xy\in E(F)}$.
	Let $A_\sigma$ be the event that $F_\sigma$ is an induced subgraph of~$G(n,p)$.
	Hence, $$\prob{A_\sigma}=p^{e(F)}(1-p)^{\binom{k}{2}-e(F)},$$
	and setting $Y=\sum_{\sigma\in \cF}\IND(A_\sigma)$, we have 
	\begin{align}
		\expn{Y}=(n)_k p^{e(F)}(1-p)^{\binom{k}{2}-e(F)}.\label{expn}
	\end{align}
	Ultimately, we want to obtain a lower bound for $\prob{Y>0}$. 
	Fix some $\sigma_0\in \cF$. By symmetry, the second moment of $Y$ can be written as $$\expn{Y^2}=\expn{Y} \sum_{\sigma\in \cF}\cprob{A_\sigma}{A_{\sigma_0}}.$$
	Applying the Paley--Zygmund inequality, we thus have 
	\begin{align}
		\prob{Y>0}\ge \frac{\expn{Y}^2}{\expn{Y^2}} = \frac{\expn{Y}}{\sum_{\sigma\in \cF}\cprob{A_\sigma}{A_{\sigma_0}}}.\label{2ndmoment inequality}
	\end{align}

	The remaining difficulty is to control the terms $\cprob{A_\sigma}{A_{\sigma_0}}$.
	We say that $\sigma\in \cF$ is \defn{compatible} (with $\sigma_0$) if $\cprob{A_\sigma}{A_{\sigma_0}}>0$. This means that, in the intersection $V(F_\sigma)\cap V(F_{\sigma_0})$, a pair $uv$ which is an edge in $F_\sigma$ cannot be a non-edge in $F_{\sigma_0}$, and vice versa, as otherwise $F_\sigma$ and $F_{\sigma_0}$ could not be induced subgraphs of $G(n,p)$ simultaneously. From now on, we can ignore all 
	$\sigma$ that are not compatible with $\sigma_0$.
	
	If $\sigma\in\cF$ is compatible with $\sigma_0$, we denote by $I_\sigma:=F_\sigma \cap F_{\sigma_0}$ the graph on $S=V(F_\sigma)\cap V(F_{\sigma_0})$ with edge set $E(F_\sigma[S])=E(F_{\sigma_0}[S])$. This ``intersection graph'' assumes a crucial role in the analysis.
	Suppose that $I_\sigma$ has $s$ vertices and $c$ components. Since $I_\sigma$ is a forest, we have $e(I_\sigma)=s-c$. These are the edges of $F_\sigma$ that we already know to be there when conditioning on~$A_{\sigma_0}$, and for $F_\sigma$, we need $e(F)-e(I_\sigma)$ ``new'' edges. 
	Moreover, there are $\binom{k}{2}-\binom{s}{2}-e(F)+e(I_\sigma)$ additional non-edges. 
	Therefore,
	\begin{align}
		\cprob{A_\sigma}{A_{\sigma_0}} = p^{e(F)-s+c} (1-p)^{\binom{k}{2}-\binom{s}{2}-e(F)+s-c}.\label{cond prob}
	\end{align}
	Note here that when the number of components $c$ is large, then the exponent of $p$ is large and hence we have a stronger upper bound on $\cprob{A_\sigma}{A_{\sigma_0}}$. On the other hand, if $c$ is small, then $\cprob{A_\sigma}{A_{\sigma_0}}$ is larger, but this will be compensated by the fact that there are fewer such~$\sigma$.
	In the following, we bound the number of compatible $\sigma\in\cF$ for which $I_\sigma$ has $s$ vertices and $c$ components. We remark that this kind of analysis was also carried out in~\cite{draganic:20} in the study of dense random graphs. We include the details for completeness, with an improved dependence on~$\Delta$.
	We make use of the following well-known counting result, see, e.g., \cite[Lemma~2]{BFMD:98} for the proof. 
	
	\begin{prop}\label{prop:branching}
		For a graph $H$ with $\Delta(H)\le \Delta$ and $v\in V(H)$, the number of (unlabelled) trees in $H$ of order $s$ which contain $v$ is at most $(e\Delta)^{s-1}$.
	\end{prop}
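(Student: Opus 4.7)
The plan is to prove the bound via a canonical BFS encoding of each subtree rooted at $v$, followed by a multinomial identity and a short Stirling-type estimate.

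First, I would fix an arbitrary linear order on $V(H)$. For any tree $T$ in $H$ on $s$ vertices containing $v$, root $T$ at $v$ and run breadth-first search from $v$, resolving ties among same-depth vertices according to the fixed order on $V(H)$. This associates to $T$ a canonical vertex ordering $v = u_1, u_2, \ldots, u_s$ together with a degree sequence $(d_1, \ldots, d_s)$, where $d_i$ is the number of children of $u_i$ in $T$. Clearly $\sum_i d_i = s-1$ and each $d_i$ lies in $\{0,1,\ldots,\Delta\}$.

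The key observation is that $T$ is fully reconstructible from the sequence of child-sets $C_i \subseteq N_H(u_i)$ at each BFS step, and that $u_i$ itself is determined by the earlier $C_j$'s via the BFS rule. Since there are at most $\binom{\Delta}{d_i}$ choices for $C_i$, the number of such subtrees is bounded by
\begin{align*}
\sum_{\substack{d_1,\ldots,d_s \ge 0 \\ d_1+\cdots+d_s = s-1}} \prod_{i=1}^s \binom{\Delta}{d_i} \;\le\; \Delta^{s-1}\cdot [x^{s-1}]\,e^{sx} \;=\; \frac{(\Delta s)^{s-1}}{(s-1)!},
\end{align*}
where I use $\binom{\Delta}{d} \le \Delta^d/d!$ to pull out $\Delta^{s-1}$ and then recognise the remaining sum as the coefficient of $x^{s-1}$ in $\bigl(\sum_{d\ge 0} x^d/d!\bigr)^s = e^{sx}$.

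Finally, a one-line induction on $s$, using $(1+1/s)^s \le e$ in the step, gives $s^s/s! \le e^{s-1}$; this is equivalent to $(\Delta s)^{s-1}/(s-1)! \le (e\Delta)^{s-1}$, completing the proof. The only subtlety is that the BFS order be canonical so that each subtree is counted at most once; this is why the linear order on $V(H)$ is fixed at the outset. (Sequences with $d_i > \Delta$ contribute nothing since $\binom{\Delta}{d_i}=0$, so the sum may harmlessly range over all non-negative compositions.)
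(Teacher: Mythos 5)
Your proof is correct. Note, however, that the paper does not actually prove Proposition~\ref{prop:branching}; it simply cites \cite[Lemma~2]{BFMD:98}, so there is no internal proof to compare against. Your argument is a valid self-contained derivation of the cited bound. The BFS encoding is sound: rooting at $v$ and breaking ties among same-depth vertices by a fixed linear order on $V(H)$ makes the exploration order $u_1,\dots,u_s$ a deterministic function of the sequence of child-sets $(C_1,\dots,C_s)$, so the map from subtrees to such sequences is injective; and $|C_i|=d_i$ with $C_i\subseteq N_H(u_i)$ gives at most $\binom{\Delta}{d_i}$ choices per step. The chain
\[
\sum_{\sum d_i=s-1}\prod_i\binom{\Delta}{d_i}\;\le\;\Delta^{s-1}\sum_{\sum d_i=s-1}\prod_i\frac{1}{d_i!}\;=\;\Delta^{s-1}\cdot\frac{s^{s-1}}{(s-1)!}\;=\;\frac{(\Delta s)^{s-1}}{(s-1)!}
\]
is exactly the coefficient extraction from $e^{sx}$, and the closing inequality $s^s/s!\le e^{s-1}$ (equivalently $s^{s-1}/(s-1)!\le e^{s-1}$) goes through by the stated induction using $(1+1/s)^s\le e$. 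Two minor remarks: for $i\ge 2$ you could even use $\binom{\Delta-1}{d_i}$ since the parent is already placed, but $\binom{\Delta}{d_i}$ suffices; and it is worth saying explicitly that not every degree sequence or child-set sequence arises from an actual subtree of $H$, which is fine since you only need an upper bound.
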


	\begin{prop}\label{prop:counting extension}
		For all $0\le c\le s$, the number of compatible $\sigma\in\cF$ for which $I_\sigma$ has $s$ vertices and $c$ components is at most $$\binom{k}{c} k^c (6\Delta^2)^s (n-k)_{k-s}.$$
	\end{prop}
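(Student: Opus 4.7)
The plan is to parametrise the compatible $\sigma$ by a triple $(U,\phi,\tau)$ and then count such triples. Given a compatible $\sigma$, set $U := \sigma^{-1}(V(F_{\sigma_0})) \subseteq V(F)$, $\phi := \sigma_0^{-1}\circ \sigma|_{U} \colon U \to V(F)$, and $\tau := \sigma|_{V(F)\setminus U}\colon V(F)\setminus U \to [n]\setminus V(F_{\sigma_0})$. Since $\sigma$ is an isomorphism $F \to F_\sigma$, its restriction to $U$ is an isomorphism $F[U] \to F_\sigma[S]=I_\sigma$; similarly $\sigma_0$ restricted to $W:=\phi(U)$ is an isomorphism $F[W]\to F_{\sigma_0}[S]$. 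Compatibility $F_\sigma[S]=F_{\sigma_0}[S]$ therefore translates exactly into $\phi$ being a graph isomorphism $F[U]\to F[W]$. In particular $|U|=s$ and $F[U]$ has $c$ components. Conversely, every such triple determines a unique compatible $\sigma$ with the prescribed intersection parameters, so the count factors as (number of pairs $(U,\phi)$) $\cdot (n-k)_{k-s}$.

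To bound the number of pairs $(U,\phi)$, I would build the $c$ components of $F[U]$ from their roots, allowing harmless overcounting. First choose an unordered set of $c$ root vertices in $V(F)$, one per component: $\binom{k}{c}$ ways. Next choose a composition $(s_r)$ of $s$ indexed by these roots, specifying the size of each component, contributing at most $\binom{s-1}{c-1}\le 2^{s-1}$. For each root $r$, grow a connected subgraph of $F$ of size $s_r$ containing $r$; since $F$ is a forest, such connected subgraphs are exactly subtrees of $F$, and Proposition~\ref{prop:branching} bounds their number by $(e\Delta)^{s_r-1}$. This determines $U$ (over-counting the true $(U,\phi)$ pairs if the chosen subtrees fail to be the components of $F[U]$, but that is acceptable for an upper bound). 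For $\phi$, choose an image $\phi(r)\in V(F)$ for each root, giving an extra factor $k^c$, and then extend $\phi$ to an isomorphic embedding of each component $T_r$ into $F$. Processing $T_r$ in breadth-first order from $r$, once the parent of a non-root vertex $v\in T_r$ has been mapped, $\phi(v)$ must be among the $\Delta(F)\le \Delta$ neighbours in $F$ of its parent's image; the non-edge and injectivity constraints can only shrink this count, so the number of extensions is at most $\Delta^{s_r-1}$.

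Multiplying all contributions gives
\[
\binom{k}{c}\,k^c\,\binom{s-1}{c-1}\prod_r (e\Delta)^{s_r-1}\Delta^{s_r-1}
\;\le\; \binom{k}{c}\,k^c\,2^{s-1}\,(e\Delta^2)^{s-c}.
\]
Since $2e<6$ and $(e\Delta^2)^{-c}\le 1$, this is bounded by $\binom{k}{c}\,k^c\,(6\Delta^2)^s$, and combining with the factor $(n-k)_{k-s}$ from $\tau$ yields the claimed inequality. The main obstacle is the bookkeeping: verifying that the triple $(U,\phi,\tau)$ genuinely parametrises every compatible $\sigma$, and isolating the correct $\Delta^{s_r-1}$ factor for the isomorphism extension. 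Once these are in place the clean factorisation $(e\Delta)^{s_r-1}\cdot\Delta^{s_r-1}=(e\Delta^2)^{s_r-1}$ delivers precisely the $\Delta^{2s}$ appearing in the target bound.
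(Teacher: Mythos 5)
Your proof is correct and follows essentially the same strategy as the paper: decompose the intersection structure into $c$ rooted components, bound the number of shapes via the root/size choice and Proposition~\ref{prop:branching}, and then count embeddings greedily (one vertex at a time, using the parent's image to restrict to at most $\Delta$ choices) before filling in the remaining $k-s$ vertices with $(n-k)_{k-s}$. The only cosmetic difference is that you parametrise on the domain side --- picking $U\subseteq V(F)$ and the isomorphism $\phi$ --- whereas the paper picks the intersection graph $I_\sigma\subseteq F_{\sigma_0}$ first and then its $\sigma$-preimages; these are dual bookkeepings related by $\sigma_0$, and the resulting factors $\binom{k}{c}$, $k^c$, $2^s$, $(e\Delta)^{s-c}$, $\Delta^{s-c}$, $(n-k)_{k-s}$ match up term for term.
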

	
	\begin{proof}
		Fix $s$ and~$c$. We can obviously assume that $s\ge c\ge 1$, as the only other non-void case is when $s=c=0$, in which case the expression $(n-k)_k$ is exactly correct. %otherwise the bound is easily seen to hold.
		The first claim is that the number of subgraphs of $F_{\sigma_0}$ with $s$ vertices and $c$ components is at most $\binom{k}{c} (2e\Delta)^s$. 
		To see this, we first choose root vertices $v_1,\dots,v_c$ for the components, for which there are at most $\binom{k}{c}$ choices. For $i\in [c]$, let $T_i$ denote the component of $I_\sigma$ which will contain~$v_i$. Next, we fix the sizes of the components. Writing $s_i=|T_i|$, the number of possibilities is bounded by the number of positive integer solutions of $s_1+\dots+s_c=s$, which is $\binom{s-1}{c-1}\le 2^s$ by a well-known formula. 
		Now, having fixed the sizes, we can apply Proposition~\ref{prop:branching} for each $i\in[c]$, with $F_{\sigma_0},v_i$ playing the roles of $H,v$, to see that the number of choices for $T_i$ is at most $(e\Delta)^{s_i-1}$, which combined amounts to $(e\Delta)^{s-c}$.
		This implies the claim, and immediately yields an upper bound on the number of possibilities for the intersection graph~$I_\sigma$.
		
		Now, fix a choice of $I_\sigma$. Since $I_\sigma$ is a forest with $c$ components, its vertices can be ordered such that every vertex, except for the first $c$ vertices in the ordering (which we can set to be the root vertices of the components), has exactly one neighbour preceding it.
		In order to count the number of possibilities for $\sigma$, we proceed as follows. 
		First, choose the preimages under $\sigma$ for the first $c$ vertices, for which there are at most $(k)_c$ choices. Now, we choose the preimages of the remaining vertices of $I_\sigma$ one-by-one in increasing order. In each step, there are at most $\Delta$ choices, since one neighbour of the current vertex has already chosen its preimage, and $I_\sigma$ has to be an induced subgraph of $F_\sigma$. Hence, there are at most $\Delta^{s-c}$ choices for the preimages of the remaining vertices of~$I_\sigma$.
		Finally, we have used $s$ vertices of $F$ as preimages for the vertices in $I_\sigma$. The remaining $k-s$ vertices of $F$ must be mapped to $[n]\sm V(F_{\sigma_0})$, so there are at most $(n-k)_{k-s}$ possibilities.
	\end{proof}

	With the preparations done, the proof of the lemma reduces to a chain of estimates.
	
	\lateproof{Lemma~\ref{lem:2ndmoment}}
	By~\eqref{2ndmoment inequality}, it suffices to show that
	\begin{align*}
		\frac{\sum_{\sigma\in \cF}\cprob{A_\sigma}{A_{\sigma_0}}}{\expn{Y}} \le \exp\left(10^4\Delta^2\frac{n\log^2 d}{d^2} + 2d^{-\eps/7}\right).
	\end{align*}
	
	We split the sum over compatible $\sigma\in \cF$ according to the number of vertices and components of~$I_\sigma$. Applying \eqref{expn},~\eqref{cond prob} and Proposition~\ref{prop:counting extension}, we obtain
	\begin{align*}
		\frac{\sum_{\sigma\in \cF}\cprob{A_\sigma}{A_{\sigma_0}}}{\expn{Y}} &\le
		\sum_{s=0}^k \sum_{c=0}^s \frac{\binom{k}{c}k^c (6\Delta^2)^s (n-k)_{k-s}  p^{e(F)-s+c} (1-p)^{\binom{k}{2}-\binom{s}{2}-e(F)}}{(n)_k p^{e(F)}(1-p)^{\binom{k}{2}-e(F)}} \\
		&= \sum_{s=0}^k \frac{(n-k)_{k-s}}{(n)_k}p^{-s}q^{\binom{s}{2}}(6\Delta^2)^s  \sum_{c=0}^s \binom{k}{c} (kp)^c.
	\end{align*}
	In order to bound this expression, we make some observations. Note that we always have 
	\begin{align}
		k\le 2\log_q(np)\le 2\frac{n}{d}\log d\label{upper bound nicer}
	\end{align}
	since $\log q\ge p$. Hence, $kp\le 2\log d$. Moreover, we have
	$\frac{(n-k)_{k-s}}{(n)_k} \le \frac{1}{(n)_s} \le (4/n)^s$ since $s\le k \le 3n/4$, say,  using~\eqref{upper bound nicer}. Observe further that $\binom{k}{c} \le \binom{2k}{c}\le \binom{2k}{s}\le \frac{(2k)^s}{s!}$.
	%To see this, observe that when $s\le k/2$, we have $\binom{k}{c}\le \binom{k}{s}$, and otherwise, $\binom{k}{c}\le 2^k \le 2^{2s}$.
	Finally, $c$ takes only $s+1\le 2^s$ values.
	Hence, the last sum displayed above is at most 
	\begin{align*}	
		\sum_{s=0}^k (4/n)^s p^{-s} q^{s^2/2} (6\Delta^2)^s \frac{(16k\log d)^s}{s!}
		&= \sum_{s=0}^k \frac{\left(\frac{384\Delta^2 k \log d}{d} q^{s/2}  \right)^s}{s!}.
	\end{align*}
	
	%	We split the sum over compatible $\sigma\in \cF$ according to the number of vertices and components of~$I_\sigma$. Applying \eqref{expn},~\eqref{cond prob} and Proposition~\ref{prop:counting extension}, we obtain
	%	\begin{align*}
		%		\frac{\sum_{\sigma\in \cF}\cprob{A_\sigma}{A_{\sigma_0}}}{\expn{Y}} &\le
		%		\sum_{s=0}^k \sum_{c=0}^s \frac{\binom{k}{c}k^c (6\Delta^2)^s (n-k)_{k-s}  p^{e(F)-s+c} (1-p)^{\binom{k}{2}-\binom{s}{2}-e(F)}}{(n)_k p^{e(F)}(1-p)^{\binom{k}{2}-e(F)}} \\
		%		&= \sum_{s=0}^k \frac{(n-k)_{k-s}}{(n)_k}p^{-s}q^{\binom{s}{2}}(6\Delta^2)^s  \sum_{c=0}^s \binom{k}{c} (kp)^c \\
		%		&\le  \sum_{s=0}^k (4/n)^s p^{-s} q^{s^2/2} (6\Delta^2)^s \frac{(16k\log d)^s}{s!}\\
		%		&= \sum_{s=0}^k \frac{\left(\frac{384\Delta^2 k \log d}{d} q^{s/2}  \right)^s}{s!}.
		%	\end{align*}
	%	To verify the last inequality, note that we always have $k\le 2\log_q(np)\le 2\frac{n}{d}\log d$ since $\log(q)\ge p$. Hence, $kp\le 2\log d$. Moreover, we have
	%	$\frac{(n-k)_{k-s}}{(n)_k} \le \frac{1}{(n)_s} \le (4/n)^s$ since $s\le k \le 2n/\eul$. We also used the fact that $\binom{k}{c} \le 4^s \binom{k}{s}\le  \frac{(4k)^s}{s!}$. To see this, observe that when $s\le k/2$, we have $\binom{k}{c}\le \binom{k}{s}$, and otherwise, $\binom{k}{c}\le 2^k \le 2^{2s}$. Finally, $c$ takes only $s+1\le 2^s$ values.
	
	We split the final sum into two terms.
	First, consider the range $s\le k/\log d$. Then $q^{s/2}\le q^{1/\log q}= \eul$. Hence, recalling that $e^x=\sum_{s\ge 0}\frac{x^s}{s!}$, we obtain the bound
	$$\sum_{s=0}^{\lfloor k/\log d\rfloor } \frac{\left(\frac{384\Delta^2 k \log d}{d} q^{s/2}  \right)^s}{s!}\le \exp\left(\frac{384\eul \Delta^2 k\log d}{d} \right) \le \exp\left(\frac{10^4 \Delta^2 n\log^2 d}{d^2} \right).$$ 
	
	Finally, for $s\ge k/\log d$, we use $s!\ge (s/\eul)^s$ to bound each summand as
	\begin{align*}
		\frac{\left(\frac{384\Delta^2 k \log d}{d} q^{s/2}  \right)^s}{s!} \le \left(\frac{384\eul \Delta^2 k\log d}{ds} q^{s/2} \right)^s \le \left(\frac{384\eul \Delta^2 \log^2 d}{d}  q^{s/2} \right)^s.
	\end{align*}
	Crucially, since $s\le k\le (2-\eps)\log_q d$, we have $q^{s/2} \le q^{(1-\eps/2)\log_q d} = d^{1-\eps/2}$. Now, for sufficiently large $d\ge d_0$ the exponent base above is bounded by $\frac{384\eul \Delta^2 \log^2 d}{d^{\eps/2}} \le d^{-\eps/7}<1$. Therefore the geometric series tells us that 
	$$\sum_{s=\lceil k/\log d \rceil}^{k} \frac{\left(\frac{384\Delta^2 k \log d}{d} q^{s/2}  \right)^s}{s!}  \le \frac{1}{1-d^{-\eps/7}} -1 \le 2d^{-\eps/7}.$$ 
	
	Altogether, we conclude that
	\begin{align*}
		\frac{\sum_{\sigma\in \cF}\cprob{A_\sigma}{A_{\sigma_0}}}{\expn{Y}} \le  \exp\left(\frac{10^4 \Delta^2 n\log^2 d}{d^2} \right) + 2d^{-\eps/7} \le \exp\left(\frac{10^4 \Delta^2 n\log^2 d}{d^2} + 2d^{-\eps/7}\right),
	\end{align*}
	completing the proof.
	\endproof

	\section{Concentration}\label{sec:concentration}
	
	In this section, we deduce Theorem~\ref{thm:forests} and Lemma~\ref{lem:forests} from Lemma~\ref{lem:2ndmoment}. 
	We will use Talagrand's inequality.\COMMENT{One could also make the argument work with Azuma's inequality, but Talagrand's inequality is more convenient to use.}
	To state it, we need the following definitions.
	Given a product probability space $\Omega=\prod_{i=1}^n \Omega_i$ (endowed with the product measure) and a random variable $X\colon \Omega\to \bR$, we say that $X$ is
	\begin{itemize}
		\item \defn{$L$-Lipschitz} (for some $L>0$) if for any $\omega,\omega'\in \Omega$ which differ only in one coordinate, we have $|X(\omega)-X(\omega')|\le L$;
		\item \defn{$f$-certifiable} (for a function $f\colon \bN\to \bN$) if for every $s$ and $\omega\in\Omega$ such that $X(\omega)\ge s$, there exists a set $I\In [n]$ of size $\le f(s)$ such that $X(\omega')\ge s$ for every $\omega'$ that agrees with $\omega$ on the coordinates indexed by~$I$.
	\end{itemize}
	
	\begin{theorem}[Talagrand's inequality, see~\cite{AS:16}]
		Suppose that $X$ is $L$-Lipschitz and $f$-certifiable. Then, for all $b,t\ge 0$,
		$$\prob{X\le b-tL\sqrt{f(b)}} \prob{X\ge b} \le \exp\left(-t^2/4\right).$$
	\end{theorem}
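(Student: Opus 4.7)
The plan is to deduce the stated inequality from the abstract Talagrand convex-distance inequality. Define, for $A\subseteq \Omega$ and $\omega\in\Omega$, the \emph{convex distance}
\[
d_T(\omega,A)=\sup_{\alpha\in\bR_{\ge 0}^n,\,\|\alpha\|_2=1}\inf_{\omega'\in A}\sum_{i=1}^n \alpha_i\IND[\omega_i\neq \omega'_i].
\]
The fundamental inequality (proved by an induction on the number $n$ of coordinates, applied to the exponential moment generating function $\expn{\exp(d_T(\omega,A)^2/4)}$ and using the convexity of $\alpha\mapsto \exp$) states that
\[
\prob{A}\cdot \expn{\exp\bigl(d_T(\omega,A)^2/4\bigr)}\le 1,
\]
which via Markov's inequality yields the more familiar form
\begin{equation*}
\prob{A}\cdot\prob{d_T(\omega,A)\ge t}\le \exp(-t^2/4).
\end{equation*}
I would take this key estimate as the starting point (it is the content of the textbook proof in~\cite{AS:16}).

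Next, I would apply it to the event $A=\Set{\omega:X(\omega)\ge b}$ and show the containment
\[
\Set{\omega:X(\omega)\le b-tL\sqrt{f(b)}}\subseteq \Set{\omega:d_T(\omega,A)\ge t}.
\]
Fix $\omega$ with $X(\omega)\le b-tL\sqrt{f(b)}$. Let $\omega'\in A$ be arbitrary. By $f$-certifiability applied to $\omega'$, there is a set $I=I(\omega')\In [n]$ with $|I|\le f(b)$ such that any $\omega''$ which agrees with $\omega'$ on $I$ still satisfies $X(\omega'')\ge b$. Take $\omega''$ to agree with $\omega'$ on $I$ and with $\omega$ on $[n]\sm I$. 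Then $\omega$ and $\omega''$ differ only on coordinates in $I$ where $\omega_i\neq \omega'_i$, and the $L$-Lipschitz property gives
\[
b-X(\omega)\le X(\omega'')-X(\omega)\le L\cdot |\set{i\in I}{\omega_i\neq \omega'_i}|.
\]
Hence $|\set{i\in I}{\omega_i\neq \omega'_i}|\ge (b-X(\omega))/L\ge t\sqrt{f(b)}$. Plugging the unit vector $\alpha_i=\IND[i\in I]/\sqrt{|I|}$ into the definition of $d_T$,
\[
\sum_i \alpha_i \IND[\omega_i\neq\omega'_i]\ge \frac{t\sqrt{f(b)}}{\sqrt{|I|}}\ge t,
\]
where we used $|I|\le f(b)$. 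Taking the infimum over $\omega'\in A$ and then the supremum over admissible $\alpha$ (note that the witness $\alpha$ may depend on $\omega'$, but taking the sup over $\alpha$ is compatible with swapping $\sup$ and $\inf$ in the required direction since we have produced a feasible $\alpha$ for every $\omega'$) yields $d_T(\omega,A)\ge t$, as desired. Combining this containment with the convex-distance inequality completes the proof.

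The only genuine obstacle is the exponential-moment estimate $\expn{\exp(d_T^2/4)}\le 1/\prob{A}$; this is the substantive content of Talagrand's theorem and I would simply invoke~\cite{AS:16}. The reduction from this to the Lipschitz/certifiable formulation is routine once one observes that certifiability provides, for each point of $A$, a small coordinate set on which a single block of disagreement forces $X$ to drop by the full Lipschitz budget.
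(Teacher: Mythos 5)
The paper states this result with a citation to Alon--Spencer and provides no proof of its own, so there is no in-paper argument to compare against; I will assess your derivation on its merits.

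Your overall strategy --- reducing to Talagrand's convex-distance inequality --- is the standard one, but there is a genuine gap in how you deploy it. You set $A=\Set{X\ge b}$ and, for a point $\omega$ with $X(\omega)$ small, try to show $d_T(\omega,A)\ge t$. The difficulty is that your certifying set $I$ --- and hence your witness vector $\alpha$ --- depends on $\omega'\in A$, whereas the definition $d_T(\omega,A)=\sup_{\alpha}\inf_{\omega'\in A}\sum_i\alpha_i\IND[\omega_i\ne\omega'_i]$ requires a \emph{single} $\alpha$, chosen before $\omega'$, to work uniformly over $A$. Producing a feasible $\alpha(\omega')$ for each $\omega'$ only establishes $\inf_{\omega'\in A}\sup_{\alpha}\sum_i\alpha_i\IND[\omega_i\ne\omega'_i]\ge t$, which by the max--min inequality is the \emph{larger} of the two quantities and therefore gives no lower bound on $d_T(\omega,A)$. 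Your parenthetical assertion that the swap goes ``in the required direction'' is precisely the false step; since $A$ is an arbitrary (nonconvex) event, no minimax theorem rescues this.

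The fix is to exchange the roles of the two sets, exploiting the symmetry of the conclusion $\prob{A}\prob{d_T(\cdot,A)\ge t}\le\exp(-t^2/4)$. Set $A=\Set{X\le b-tL\sqrt{f(b)}}$ and fix any $\omega$ with $X(\omega)\ge b$. Invoking $f$-certifiability for \emph{this} $\omega$ (with $s=b$) gives a single set $I=I(\omega)$ with $|I|\le f(b)$ such that every $\omega''$ agreeing with $\omega$ on $I$ has $X(\omega'')\ge b$; the vector $\alpha_i=\IND[i\in I]/\sqrt{|I|}$ is then fixed once and for all, independently of $\omega'$. For arbitrary $\omega'\in A$, take $\omega''$ agreeing with $\omega$ on $I$ and with $\omega'$ off $I$: then $X(\omega'')\ge b$, and $\omega''$ differs from $\omega'$ only on $\set{i\in I}{\omega_i\ne\omega'_i}$, so $L$-Lipschitzness gives $|\set{i\in I}{\omega_i\ne\omega'_i}|\ge (X(\omega'')-X(\omega'))/L\ge t\sqrt{f(b)}$, whence $\sum_i\alpha_i\IND[\omega_i\ne\omega'_i]\ge t\sqrt{f(b)}/\sqrt{|I|}\ge t$ uniformly in $\omega'\in A$. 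This yields $d_T(\omega,A)\ge t$, i.e.\ $\Set{X\ge b}\In\Set{d_T(\cdot,A)\ge t}$, and the convex-distance inequality applied to this $A$ gives the claimed bound. This is exactly the deduction in Alon--Spencer; your estimates were all correct, but the direction of the reduction was backwards.
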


	Our probability space is of course $G(n,p)$. Although this comes naturally as a product of $\binom{n}{2}$ elementary probability spaces $\Omega_{ij}$, one for each potential edge $ij$, it can be more effective, depending on the problem, to consider a description that is vertex-oriented, where the edges incident to a vertex are combined into one probability space (``vertex exposure'').
	Concretely, for $i\in [n-1]$, let $\Omega_i=\prod_{j>i}\Omega_{ij}$ represent all edges from vertex $i$ to vertices $j>i$.
	Then $G(n,p)=\prod_{i=1}^{n-1}\Omega_i$. Note here that the vertices are ordered to describe the product space in a way that every edge appears exactly once. Apart from that, this ordering plays no role.

	\lateproof{Theorem~\ref{thm:forests}}
	Fix $\eps>0$, a tree~$T$, and assume $d_0$ is sufficiently large. Let $L=|V(T)|$ and $d=np$. We first show the upper bound, by arguing that \textbf{whp} every set of size at least $t=(2+\eps)\log_q (np)$ spans at least $t$ edges, which prevents us from finding any induced forest of order $t$.
	Indeed, the probability that a fixed $t$-set  spans at most $t$ edges is at most  \[(t+1)\binom{\binom{t}{2}}{t}p^t(1-p)^{\binom{t}{2}-t},\]
	as the number of edges in such a set follows a binomial distribution $\text{Bin}\left(\binom{t}{2},p\right)$, with the mean being larger than $t$ (where we used that $p\le 0.99$). Hence we have that the expected number of $t$-sets which span at most $t$ edges is at most 
	\begin{align*}
		(t+1)\binom{n}{t}\binom{\binom{t}{2}}{t}p^t(1-p)^{\binom{t}{2}-t}&\leq (t+1)\left(\frac{en}{t}\right)^t\left(\frac{e\binom{t}{2}}{t}\right)^t\left(\frac{d}{n}\right)^t d^{-(1+\eps/3)t}\\
		&<(t+1) (e^2/2)^t  d^{-\eps t/3}=o(1),
	\end{align*}
	where we used standard estimates and the fact that $(1-p)^{t}=q^{-t}=d^{-(2+\eps)}$.
	
	We now turn to the lower bound.
	Let $X$ be the maximum order of an induced $T$-matching in $G(n,p)$.
	Our goal is to show that $X\ge (2-\eps)\log_q d$ \textbf{whp}. Set $b=(2-\eps/2)\log_q d$. %(Since $\log_q(d)=\omega_n(1)$, we may assume that $b$ is divisible by~$L$.)
	
	First, by Lemma~\ref{lem:2ndmoment}, we have 
	\begin{align*}
		\prob{X\ge b} \ge \exp\left(-10^4L^2\frac{n\log^2 d}{d^2} -2d^{-\Omega(\eps)}\right).
	\end{align*}
	This means that in the case $d\ge n^{1/2}\log^{2}n$, we are already done. Assume now that $d\le n^{1/2}\log^{2}n$. Then the above bound simplifies to 
	\begin{align}
		\prob{X\ge b} \ge \exp\left(-\frac{n\log^3 d}{d^2}\right).\label{2nd moment}
	\end{align}
	Recall also that in the regime $d=o(n)$ we have $\log_q d\sim \frac{n}{d}\log d$.
	
	It is easy to check that $X$ is $L$-Lipschitz and $f$-certifiable, where $f(s)=s+L$. Indeed, adding or deleting edges arbitrarily at one vertex can change the value of $X$ by at most~$L$, hence $X$ is $L$-Lipschitz. Moreover, if $X\ge s$, this means there is a set $I\In[n]$ of size $s\le |I|< s+L$ which induces a $T$-matching. If we leave the coordinates indexed by $I$ unchanged, this means in particular that $I$ still induces a $T$-matching, hence we still have $X\ge s$.
	
	Hence, Talagrand's inequality applied with $t=\frac{\sqrt{n}\log^3 d}{d}$ yields
	$$\prob{X\le b-tL\sqrt{b+L}} \prob{X\ge b} \le \exp\left(-\frac{n\log^6 d}{4d^2}\right).$$
	Together with~\eqref{2nd moment} and since 
	\begin{align}
		tL\sqrt{b+L}\le \frac{\sqrt{n}\log^3 d}{d} L \sqrt{2\frac{n}{d}\log d}\le \frac{n}{d},\label{Lipschitz}
	\end{align}%(where the last inequality justifies our choice of $L$),
	we infer that the probability of $X\le b-\frac{n}{d}$ is at most $\exp\left(-\frac{n\log^6 d}{5d^2}\right)=o_n(1)$.
	This completes the proof since $b-\frac{n}{d}\ge (2-\eps)\log_q d$.
	\endproof

	In the above proof, we had some room to spare in~\eqref{Lipschitz}. We will now exploit this to allow the component sizes to grow with~$d$. The proof is almost verbatim the same, so we only point out the differences.

	\lateproof{Lemma~\ref{lem:forests}}
	Note that we are only interested in the case $d\le n^{1/2}\log^{2}n$ and when $T$ is a path of order~$L$. Since $\Delta(T)\le 2$, Lemma~\ref{lem:2ndmoment} still provides the lower bound in~\eqref{2nd moment}.
	All we have to ensure now is that~\eqref{Lipschitz} still holds, and this is easily seen to be the case as long as $L\le d^{1/2}/\log^4 d$.
	\endproof

	\section{Connecting}\label{sec:connect}
	In this section, we use Lemma~\ref{lem:forests} to prove Theorem~\ref{thm:path} as outlined in Section~\ref{sec:paths}. Recall that we intend to define an auxiliary digraph on the components of a linear forest, where an edge corresponds to a suitable connection between two paths. 
	Our goal is to find an almost spanning path in this random digraph. The tool which enables us to achieve this, Lemma~\ref{lem:DFS} below, is based on the well-known graph exploration process \emph{depth-first-search} (DFS).
	The usefulness of DFS to find long paths in random graphs was demonstrated by Krivelevich and Sudakov~\cite{KS:13} in a paper where they give rather short and simple proofs of classical results in random graph theory. In our proof, we use the following straightforward consequence of DFS.
	
	\begin{lemma}[{\cite[Lemma~4.4]{BEKS:12}}]\label{lem:DFS}
		Let $D$ be a digraph on $N$ vertices and suppose that for any two disjoint sets $S,T\In V(D)$ of size $k$, there exists an edge directed from $S$ to~$T$. Then $D$ contains a path of length $N-2k+1$.
	\end{lemma}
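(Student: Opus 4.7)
The plan is to run depth-first search (DFS) on the digraph $D$ and exploit its structural invariants, in the spirit of the approach of Krivelevich and Sudakov \cite{KS:13}. At any moment during the execution, the vertex set is partitioned into three sets: $A$, the vertices whose exploration is finished (popped from the stack); $B$, the vertices currently on the stack; and $C$, the vertices not yet visited. Two standard observations will carry the argument. First, $B$ always induces a directed path from the bottom of the stack to the top, because each vertex is pushed as an out-neighbour of the current top. Second, at every moment there is no directed edge from $A$ to $C$, because a vertex is popped only once it has no out-neighbours remaining in $C$, and $C$ only shrinks afterwards, so no such edge can materialise later.

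During the execution, $|A|$ is non-decreasing from $0$ to $N$ and $|C|$ is non-increasing from $N$ to $0$, each changing by at most one per step. The strategy is to pick the right moment to obtain simultaneous lower bounds on $|A|$ and $|C|$. Assuming $N\ge 2k$ (otherwise the conclusion is vacuous), I would let $t^{\star}$ be the last time at which $|C|=k$, so that the very next step must be a push reducing $|C|$ to $k-1$. At time $t^{\star}$ we have $|A|+|B|=N-k$. If $|A|\ge k$, then the hypothesis of the lemma supplies a directed edge from some $k$-subset of $A$ to $C$, contradicting the second observation above. Hence $|A|\le k-1$, which forces $|B|\ge N-2k+1\ge 1$.

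Since $|B|\ge 1$ at $t^{\star}$, the push at time $t^{\star}+1$ extends the current stack rather than starting a fresh DFS tree; consequently, immediately after this push the stack induces a directed path on at least $N-2k+2$ vertices, i.e.\ of length at least $N-2k+1$, which is exactly what the lemma claims. The only mild subtlety is identifying the instant $t^{\star}$ so that $|C|$ is still just large enough to set up the contradiction while one additional push furnishes the required path length; once this instant is pinned down, both cases close immediately from the DFS invariants, and no further estimation is needed.
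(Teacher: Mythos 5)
The paper cites this lemma from~\cite{BEKS:12} without reproducing its proof, so there is no in-paper argument to compare against; your DFS proof is correct and follows the same depth-first-search approach used in the cited source (and in~\cite{KS:13}), resting on the two standard invariants that the stack $B$ spans a directed path and that no edge runs from $A$ to $C$. A minor streamlining of your bookkeeping: since $|A|-|C|$ increases by exactly one at every DFS step (push or pop alike), it passes through $0$; at that moment $|A|=|C|\le k-1$ by the no-edges-from-$A$-to-$C$ invariant, so $|B|\ge N-2k+2$ immediately, avoiding the extra appeal to the push at time $t^\star+1$ and the check that it extends the current stack rather than starting a fresh tree.
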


	\lateproof{Theorem~\ref{thm:path}}
	Fix $\eps>0$ and assume that $d\ge d_0$ is sufficiently large.
	We will assume that $d=o(n^{1/2}\log^2 n)$. For the case $d=\omega(n^{1/2}\log n)$, Lemma~\ref{lem:2ndmoment} implies that \textbf{whp} there exists an induced path of length $(2-\eps)\frac{n}{d}\log d$.
	
	We expose the random graph in several stages, and will after each step fix an outcome that holds with high probability.
	First, we consider $G\sim G(n,p)$ as the union of two independent random graphs $G_1$ and $G_2$, where $G_2\sim G(n,p_2)$ with $p_2=\frac{d}{n\log d}$, and $G_1\sim G(n,p_1)$ with $p_1$ such that $1-p=(1-p_1)(1-p_2)$. (Hence, $G_1\cup G_2$ has the same distribution as $G$.)
	Note that clearly $p_1\le p=d/n$.
	
	Fix a subset $V_0\In [n]$ of size $\frac{n}{2d}$. Now, in the first exposure round, we reveal all random edges from $G=G_1\cup G_2$ inside~$V_0$.
	The expected number of edges is at most $\frac{n}{8d}$, and using a standard Chernoff bound, it is easily seen that \textbf{whp}, the number of edges inside $V_0$ is at most $\frac{n}{6d}$, say. From now on, fix such an outcome. By deleting a vertex from each edge, we can find an independent set $I\In V_0$ in $G[V_0]$ of size $\frac{n}{3d}$.
	
	In the second round, we expose all edges with one vertex in $I$ and the other in $[n]\setminus V_0$, but only those which belong to~$G_1$. The expected number of such edges is at most~$n/3$.
	Again using a Chernoff bound, the number of $G_1$-edges leaving $I$ is at most $2n/5$ \textbf{whp}. From now on, fix such an outcome.
	Let $V_1=[n]\sm (I\cup N_{G_1}(I))$. Since $|N_{G_1}(I)|+|I|\le n/2$, we have $|V_1|\ge n/2$.
	
	In the third step, we reveal the random edges of $G=G_1\cup G_2$ inside~$V_1$.
	Now we apply Lemma~\ref{lem:forests} to $G[V_1]$; we get that \textbf{whp} there exists an induced forest $F$ of order $$ \left(2-\eps \right)p^{-1}\log \left(np/2\right)\ge \left(2-2\eps\right)\frac{n}{d}\log d$$ whose components are paths of order $L=\Theta\left(\frac{\sqrt{d}}{\log^{4}d}\right)$. 
	Again, we fix such~$F$. Note that $F$ is induced in $G=G_1\cup G_2$ and that $I$ is independent in $G=G_1\cup G_2$. Moreover, by definition of $V_1$, we know that there are no edges in $G_1$ between $F$ and~$I$.
	
	In the fourth and final step, we reveal all the remaining random edges, which in particular includes the $G_2$-edges between $F$ and~$I$.
	Our goal is to use some vertices from $I$ to connect most of the paths of $F$ into one long induced path.
	To achieve this, we define the following auxiliary digraph~$D$.
	Give each of the component paths $P$ of $F$ an arbitrary direction, and let $P^-$ denote the initial $\eps L$ vertices and $P^+$ the last $\eps L$ vertices on~$P$.
	Now, the vertex set of $D$ is simply the set of components of~$F$. 
	For two paths $P_1,P_2$, we include $(P_1,P_2)$ as an edge in $D$ if there exists a vertex $a\in I$ such that $a$ has exactly one edge (of $G_2$) to both $P_1^+$ and $P_2^-$, but no other edge (of $G_2$) to any vertex of~$F$.
	Note that $D$ is a random digraph. Our claim is that, with high probability, it contains an almost spanning path.
	Let $N=|V(D)|=\Theta\left(\frac{n}{dL}\log d\right)$. Consider any two disjoint sets $S,T\In V(D)$ of size $\eps N$.
	For each $a\in I$ and all $P_1\in S,P_2\in T$, the probability that $a$ is a suitable connection from $P_1$ to $P_2$ is the sum of probabilities over the $(\eps L)^2$ possible pairs to form a suitable connection (since those events are disjoint) and equals to
	$$(\eps L p_2)^2 (1-p_2)^{|F|-2}\ge (\eps L p_2)^2 \eul^{-2p_2|F|} =  \Theta\left(\eps^2 L^2 p_2^2\right)$$
	since $p_2|F|=\Theta(1)$ by our choice of~$p_2$.
	Moreover, for distinct pairs $(P_1,P_2)$ and fixed $a$, these events are disjoint. Hence, the probability of $a$ giving some good connection from $S$ to $T$ is $\Theta(\eps^4 N^2 L^2 p_2^2)=\Theta(\eps^4)$.
	Finally, for distinct $a$, these events are determined by disjoint sets of edges, and hence independent, so the probability that there is no edge from $S$ to $T$ in $D$ is at most $$\left(1-\Theta(\eps^4)\right)^{|I|} \le \exp\left(-\Omega(\eps^4n/d) \right).$$
	The total number of choices for $S$ and $T$ is at most $4^N=\exp (\Theta(\frac{n}{dL}\log d))$. Thus, since we have that $L\gg \log d/\eps^4$, a union bound yields that \textbf{whp}, we can apply Lemma~\ref{lem:DFS} to get a path $P_1P_2\dots P_t$ in $D$ of length $(1-2\eps)N$. 
	This translates to an induced path of $G$ as follows: for each $i\in [t-1]$, since $P_iP_{i+1}\in E(D)$, there exists a vertex $a_i\in I$ which has exactly one edge (of $G_2$) to both $P_i^+$ and $P_{i+1}^-$, but no other edge (of $G_2$) to any vertex of~$F$. Clearly, the $a_i$'s are distinct, hence we obtain a path in $G$ in the obvious way (start with $P_1$, then from the appropriate vertex in $P_1^+$, go via $a_1$ to $P_2^-$, follow $P_2$, and then go from $P_2^+$ via $a_2$ to $P_3^-$, etc.). As remarked earlier, there are no $G_1$-edges between $F$ and $I$, hence by the definition of $E(D)$, the path will be induced. 
	Finally, from each $P_i$, we only lose at most $2\eps L$ vertices, hence the length of the path will be at least $$(L-2\eps L)(1-2\eps)N  = (1-2\eps)^2|F|\ge \left(2-O(\eps)\right)\frac{n}{d}\log d,$$ completing the proof.
	\endproof
	
	\begin{remark}\label{rem:cycle}
		The condition in Lemma~\ref{lem:DFS} actually also implies the existence of a cycle of length at least $N-4k+4$ (since there is an edge from the last $k$ vertices on the obtained path to the first $k$ vertices). Using this, in the above proof, we can connect the paths of $F$ into an induced cycle of length $(2-O(\eps))\frac{n}{d}\log d$.
	\end{remark}

	\section{Concluding remarks}
	
	\begin{itemize}
		
		\item Our proof is not constructive, since the first part of the argument uses the second moment method.
		The previously best bound $\sim\frac{n}{d}\log d$ due to \L{}uczak~\cite{luczak:93} and Suen~\cite{suen:92} was obtained via certain natural algorithms.
		It seems that this could be a barrier for such approaches. A (rather unsophisticated) heuristic giving evidence is that when we have grown an induced tree of this size, and assume the edges outside are still random, then the expected number of vertices which could be attached to a given vertex of the tree is less than one for $|V(T)|\ge(1+\eps)\frac{n\log d}{d}$.
		Moreover, such an ``algorithmic gap'' has been discovered for many other natural problems.
		In particular, despite decades of research, no polynomial-time algorithm is known which finds an independent set of size $(1+\eps)\frac{n}{d}\log d$ for any fixed $\eps>0$, and evidence has emerged that in fact such an algorithm might not exist (see e.g.~\cite{COE:15,GS:17,RV:17}).

		\item 
		In~\cite{CDKS:ta} it is conjectured that one should not only be able to find an induced path of size $\sim 2\frac{n}{d}\log d$, but an induced copy of any given bounded degree tree of this order. For dense graphs, when $d=\omega(n^{1/2}\log n)$, this follows from the second moment method (see~\cite{draganic:20}). In fact, Lemma~\ref{lem:2ndmoment} shows that the maximum degree can even be a small polynomial. On the other hand, the sparse case seems to be more difficult, mainly because the vanilla second moment method does not work.
		However, Dani and Moore~\cite{DM:11} demonstrated that one can actually make the second moment method work, at least for independent sets, by considering a \defn{weighted} version. This even gives a more precise result than the classical one due to Frieze~\cite{frieze:90}. It would be interesting to find out whether this method can be adapted to induced trees.

	\end{itemize}
	
	\section*{Acknowledgement}
	
	The first two authors are grateful to Benny Sudakov for very useful discussions.

	\providecommand{\bysame}{\leavevmode\hbox to3em{\hrulefill}\thinspace}
	\providecommand{\MR}{\relax\ifhmode\unskip\space\fi MR }
	% \MRhref is called by the amsart/book/proc definition of \MR.
	\providecommand{\MRhref}[2]{%
		\href{http://www.ams.org/mathscinet-getitem?mr=#1}{#2}
	}
	\providecommand{\href}[2]{#2}

\end{document}